\newcommand{\sHom}{\underline{\mathrm{Hom}}}
\newcommand{\iHom}{\mathscr{H}om}
\newcommand{\colim}[1]{\mathop{\underset{#1}{\mathrm{colim}}}\nolimits}
\newcommand{\psh}{\mathrm{PSh}}
\newcommand{\sh}{\mathrm{Sh}}
\newcommand{\pcsh}{\mathrm{PCSh}}
\newcommand{\csh}{\mathrm{CSh}}
\newcommand{\igrpd}{\mathbb{S}}
\newcommand{\set}{\mathbf{Set}}
\newcommand{\C}{\mathcal{C}}
\newcommand{\D}{\mathcal{D}}
\newcommand{\V}{\mathcal{V}}
\newcommand{\X}{\mathcal{X}}
\newcommand{\F}{\mathscr{F}}
\newcommand{\G}{\mathscr{G}}
\newcommand{\R}{\Lambda}
\newcommand{\p}[2]{\left\langle{#1},{#2}\right\rangle}
\renewcommand{\d}{\dagger}
\newcommand{\e}{\varepsilon}
\renewcommand{\i}{\infty}
\renewcommand{\1}{\mbox{1}\hspace{-0.25em}\mbox{l}}
\newcommand{\yo}{\mathrm{Y}}
\newcommand{\bu}{\bullet}
\renewcommand{\P}{\mathcal{P}}
\newcommand{\U}{\mathcal{U}}
\newtheorem{thm}{Theorem}[section]
\newtheorem{thmi}{Theorem}
\newtheorem{lem}[thm]{Lemma}
\newtheorem{prop}[thm]{Proposition}
\theoremstyle{definition}
\newtheorem{defi}[thm]{Definition}
\newtheorem{examp}[thm]{Example}
\newtheorem{rem}[thm]{Remark}
\newtheorem*{nota}{Notation}
\newtheorem*{acknow}{Acknowledgments}
\begin{document}

\title{$\infty$-cosheafification \footnotetext{Mathematics Subject Classification 2020: Primary 18F10, 18F20; Secondary 18N60.} \footnotetext{Key words: cosheaf, cosheafification, $\i$-category.}}
\author{Yuri Shimizu\footnote{Department of Mathematics, Tokyo Institute of Technology, Tokyo, Japan \newline E-mail: qingshuiyouli27@gmail.com}}
\maketitle

\begin{abstract}
Cosheaves are a dual notion of sheaves. In this paper, we prove existence of a dual of sheafifications, called \textit{cosheafifications}, in the $\infty$-category theory. We also prove that the $\infty$-category of $\infty$-cosheaves is presentable and equivalent to an $\infty$-category of left adjoint functors.
\end{abstract}

\section*{Introduction}

Let $\X$ be a small category equipped with a pretopology $J$ and $\V$ be a category. 
Recall that a functor $\G : \X \to \V$ is called a \textit{precosheaf} on $\X$. 
It is called a \textit{cosheaf} if the diagram
\begin{equation}\label{coeq cosh}
\coprod_{j,k \in I} \G(U_j \times_U U_j) \rightrightarrows \coprod_{i \in I} \G(U_i) \to \G(U)
\end{equation}
is a coequalizer in $\V$ for all $\{U_i \to U\}_i \in J$. 
In other words, a cosheaf is a $\V^{\mathrm{op}}$-valued sheaf. 
This is a dual notion of sheaves. 
For example, the zeroth homology functor on the category of open sets is a cosheaf of abelian groups. 
We write $\pcsh(\X,\V)$ for the category of $\V$-valued precosheaves on $\X$ and $\csh(\X,\V)$ for its full subcategory consisting of cosheaves. 
In \cite{Pra}, Prasolov proved that the inclusion $\csh(\X,\V) \hookrightarrow \pcsh(\X,\V)$ admits a \textit{right} adjoint, called the cosheafification functor. This is a dual notion of sheafification, which is a \textit{left} adjoint functor of the natural inclusion. 

In this paper, we develop a basic theory of cosheaves in $\i$-category, 
and prove the existence of cosheafifications in the $\i$-categorical situation. 
Let $\X$ be a small $\i$-site and $\V$ be a symmetric closed monoidal $\i$-category which is presentable. 
We define $\V$-valued cosheaves on $\X$ by using \v{C}ech nerves instead of coequalizer diagrams (Definition \ref{def cosh}). 
The reason that we use \v{C}ech nerves is to define cosheaves as a dual of sheaves in the sense of Lurie \cite{HTT}. 
It turns out that this definition is indeed a generalization of cosheaves in $1$-category (see Remark \ref{ord cosh}). 

Our main result is summarized as follows.

\begin{thmi}[see Theorem \ref{main}]\label{main intro}
Assume that $\X$ and $\V$ satisfy the conditions \eqref{SL=L} and \eqref{sh=SVloc}. Then

(1) $\csh(\X,\V) \hookrightarrow \pcsh(\X,\V)$ admits a right adjoint,

(2) $\csh(\X,\V)$ is presentable, and

(3) $\csh(\X,\V)$ is equivalent to the $\i$-category of left adjoint functors from $\sh(\X,\V)$ to $\V$.
\end{thmi}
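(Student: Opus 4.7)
The plan is to reduce everything to part (3): once we have $\csh(\X, \V) \simeq \mathrm{Fun}^L(\sh(\X, \V), \V)$, claim (2) is the presentability of an internal hom in $\mathrm{Pr}^L$ between two presentable $\i$-categories (\emph{HTT}, \S 5.5.3), and the right adjoint in (1) is furnished by the adjoint functor theorem applied to the inclusion $\csh(\X, \V) \hookrightarrow \pcsh(\X, \V)$. That inclusion preserves colimits because a pointwise colimit of cosheaves is again a cosheaf, by the commutation of colimits with the defining \v{C}ech colimit equivalence; together with the presentability of both sides, this yields the right adjoint.

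For (3), I would first use Yoneda extension. Under the symmetric closed monoidal presentable hypotheses on $\V$, assumption \eqref{SL=L} should identify $\V$-linear cocontinuous functors out of $\psh(\X, \V)$ with plain cocontinuous ones; combined with the universal property of $\psh(\X, \V)$ as the free $\V$-module on $\X$, left Kan extension along the Yoneda embedding $\yo : \X \to \psh(\X, \V)$ then yields an equivalence
\[
\pcsh(\X, \V) = \mathrm{Fun}(\X, \V) \xrightarrow{\simeq} \mathrm{Fun}^L(\psh(\X, \V), \V), \qquad \G \longmapsto \tilde \G.
\]

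Next I would match the cosheaf condition on $\G$ with the condition that $\tilde\G$ factor through sheafification. Because $\tilde\G$ is cocontinuous and $\tilde\G \circ \yo \simeq \G$, the \v{C}ech descent condition $\G(U) \simeq |\G(U_\bu)|$ on a cover $\{U_i \to U\}$ is equivalent to $\tilde\G$ inverting the canonical map $|\yo(U_\bu)| \to \yo(U)$ in $\psh(\X, \V)$. By assumption \eqref{sh=SVloc}, sheafification $L : \psh(\X, \V) \to \sh(\X, \V)$ is the accessible localization at exactly this class of morphisms, so by the universal property of localization (\emph{HTT}, \S 5.5.4) the fully faithful embedding $- \circ L : \mathrm{Fun}^L(\sh(\X, \V), \V) \hookrightarrow \mathrm{Fun}^L(\psh(\X, \V), \V)$ has essential image precisely the $\tilde\G$ satisfying the cosheaf condition, giving the desired equivalence.

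The main obstacle will be coordinating the two assumptions in these last steps: one must ensure that the $\V$-valued covering maps characterizing $\sh(\X, \V)$ as a localization of $\psh(\X, \V)$ (via \eqref{sh=SVloc}) are transported under the Yoneda extension (via \eqref{SL=L}) to exactly the morphisms whose inversion by $\tilde\G$ encodes the cosheaf condition on $\G$. This is precisely the enriched-versus-unenriched comparison that the combined hypotheses are designed to enforce.
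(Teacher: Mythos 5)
Your proposal is correct and follows essentially the same route as the paper: the coend pairing $\p{\F}{\G}=\int^{U\in\X}\F(U)\otimes\G(U)$ is exactly your Yoneda extension, Proposition \ref{psh-pcsh nd} is your identification $\pcsh(\X,\V)\simeq[\psh(\X,\V),\V]^{\mathrm{L}}$ under \eqref{SL=L}, and Propositions \ref{loc rndp} and \ref{prop dually coloc} together supply the universal property of the localization $\sh(\X,\V)=\psh(\X,\V)^{S_\V\mathchar`-\mathrm{loc}}$ that you instead cite from \cite[\S 5.5.4]{HTT}. The only (harmless) divergence is in part (1): you check that cosheaves are closed under colimits in $\pcsh(\X,\V)$ and invoke the adjoint functor theorem, whereas the paper dualizes and observes that $[\V,\sh(\X,\V)]^{\mathrm{R}}\to[\V,\psh(\X,\V)]^{\mathrm{R}}$ preserves limits.
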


The right adjoint functor constructed in (1) gives the $\infty$-cosheafification. 
The conditions \eqref{SL=L} and \eqref{sh=SVloc} are satisfied in various cases including the following 
(cf.~Examples \ref{ex sl=l} and \ref{ex sh=svloc}): 
with $\X$ arbitrary,
\begin{itemize}
\item $\V$ the $\i$-category $\igrpd$ of $\i$-groupoids,
\item $\V$ the full subcategory of $\igrpd$ consisting of $n$-truncated objects for $n \geq 0$, 
\item $\V$ the connective part of the $\i$-derived category of a commutative ring,  
\item $\V$ the $\i$-category of connective spectra.
\end{itemize}
In \cite{BF}, Bunge-Funk proved that for a site $\X$ and an elementary topos $\V$, $\csh(\X,\V)$ is equivalent to the category of cocontinuous functors from $\sh(\X,\V)$ to $\V$. Theorem \ref{main intro} (3) implies Bunge-Funk's result in the case $\V = \set$.

This paper is organized as follows. In \S\ref{ndp}, we give technical preliminaries from the $\i$-category theory. In \S\ref{PPP}, we prove that the $\i$-category $\pcsh(\X,\V)$ is equivalent to the $\i$-category of left adjoint functors from $\psh(\X,\V)$ to $\V$ under the condition \eqref{SL=L}. In \S\ref{Pf main}, we prove the main result.

\begin{nota}
In this paper, we use quasi-categories as a model of $\i$-categories. We write $\igrpd$ for the $\i$-category of small $\i$-groupoids. For an $\i$-category $\C$, the simplicially enriched hom-set from $X \in \C$ to $Y \in \C$ is denoted by $\sHom_{\C}(X,Y)$. When $\C = \igrpd$, we write $[X,Y] = \sHom_{\igrpd}(X,Y)$ and $\P(X) = [X,\igrpd]$.
\end{nota}

\begin{acknow}
I would like to thank my adviser Shohei Ma for many useful advices. This work was supported by JSPS KAKENHI 19J21433.
\end{acknow}

\section{Right non-degenerate $\i$-bifunctors}\label{ndp}

In this section, we give technical preliminaries from the theory of $\i$-categories. For $\i$-categories $\C_1$ and $\C_2$, we write $[\C_1,\C_2]^\mathrm{L}$ (resp.~$[\C_1,\C_2]^\mathrm{R}$) for the full subcategory of $[\C_1,\C_2]$ consisting of left (resp.~right) adjoint functors. We refer to \cite{HTT} for the basic theory of $\i$-categories.

\begin{defi}
Let $\C$, $\C^\vee$ and $\V$ be $\i$-categories. A bifunctor $\C \times \C^\vee \to \V$ is called \textit{right non-degenerate}, if the induced functor $\C^\vee \to [\C,\V]$ is fully faithful and its essential image coincides with $[\C,\V]^\mathrm{L}$.
\end{defi}

Throughout this section, we fix a right non-degenerate bifunctor $\p{}{} : \C \times \C^\vee \to \V$.

\begin{lem}\label{pres. dual}
If $\C$ and $\V$ are presentable, then so is $\C^\vee$.
\end{lem}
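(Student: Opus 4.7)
By the very definition of right non-degeneracy, the induced functor $\C^\vee \to [\C,\V]$ is fully faithful with essential image $[\C,\V]^\mathrm{L}$, so it restricts to an equivalence $\C^\vee \simeq [\C,\V]^\mathrm{L}$. Since presentability is preserved under equivalences of $\i$-categories, it therefore suffices to prove that $[\C,\V]^\mathrm{L}$ is presentable.

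Because both $\C$ and $\V$ are presentable, the adjoint functor theorem (\cite[Corollary 5.5.2.9]{HTT}) identifies $[\C,\V]^\mathrm{L}$ with the full subcategory of $[\C,\V]$ spanned by the functors which preserve small colimits, i.e.\ with $\mathrm{Fun}^\mathrm{L}(\C,\V)$ in the notation of \cite{HTT}. I would then invoke \cite[Proposition 5.5.3.8]{HTT}, which asserts that for presentable $\i$-categories $\C$ and $\V$ the $\i$-category $\mathrm{Fun}^\mathrm{L}(\C,\V)$ of colimit-preserving functors is itself presentable. Combining the two equivalences yields presentability of $\C^\vee$.

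The only subtlety worth flagging is making sure the size hypotheses line up: one needs $\C$ to be presentable (so that small-colimit-preservation is equivalent to being a left adjoint) and one uses that the colimit-preservation condition is witnessed by a set of small diagrams, which is what makes the localization from $[\C,\V]$ to $[\C,\V]^\mathrm{L}$ accessible. Both are standard and packaged into the cited statements of \cite{HTT}, so no genuine obstacle arises; the lemma is essentially a transport of structure along the equivalence furnished by the right non-degenerate bifunctor.
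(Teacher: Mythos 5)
Your proof is correct and follows exactly the paper's own argument: transport presentability along the equivalence $\C^\vee \simeq [\C,\V]^\mathrm{L}$ furnished by right non-degeneracy, then apply \cite[Prop.~5.5.3.8]{HTT}. The extra remarks on identifying left adjoints with colimit-preserving functors are fine but not a departure from the paper's route.
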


\begin{proof}
This follows from the equivalence of $\i$-categories $\C^\vee \cong [\C,\V]^\mathrm{L}$ and \cite[Prop.~5.5.3.8]{HTT}.
\end{proof}

We write $\G^\d$ for the functor $\p{-}{\G} : \C \to \V$. Recall that there exists a canonical equivalence of $\i$-categories
\[
\e : [\C,\V]^\mathrm{L} \xrightarrow{\cong} \left([\V,\C]^\mathrm{R}\right)^{\mathrm{op}}
\]
such that $\e F$ is right adjoint to $F$ for all $F \in [\C,\V]^\mathrm{L}$ (see \cite[Prop.~5.2.6.2]{HTT} and its proof). Then we write $\G_\d = \e(\G^\d)$ for $\G \in \C^\vee$.

\begin{defi}
Let $\D$ be a full subcategory of $\C$. We write $\D^\vee$ for the full subcategory of $\C^\vee$ consisting of $\G \in \C^\vee$ such that $\G_\d v \in \D$ for all $v \in \V$. We call $\D^\vee$ \textit{the dual of} $\D$ with respect to $\p{}{}$.
\end{defi}

The following property says that localizations of $\C$ induce a right-nondegenerate bifunctor.

\begin{prop}\label{loc rndp}
Let $\D$ be a reflective full subcategory of $\C$.

(1) The restriction $\D \times \D^\vee \to \V$ of $\p{}{}$ is right non-degenerate.

(2) If $\C$, $\D$ and $\V$ are presentable, then $\D^\vee$ is coreflective in $\C^\vee$.
\end{prop}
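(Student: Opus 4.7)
Write $\iota: \D \hookrightarrow \C$ for the inclusion and $L: \C \to \D$ for its left adjoint; note $L\iota \simeq \mathrm{id}_\D$. I would first establish the identification
\[
\G \in \D^\vee \iff \G^\d \simeq (\G^\d|_\D) \circ L
\]
for $\G \in \C^\vee$. For the forward direction: if $\G_\d = \iota \tilde\G_\d$ with $\tilde\G_\d: \V \to \D$, then full faithfulness of $\iota$ gives
\[
\Hom_\D(d, \tilde\G_\d v) \simeq \Hom_\C(\iota d, \iota\tilde\G_\d v) = \Hom_\C(\iota d, \G_\d v) \simeq \Hom_\V(\G^\d \iota d, v),
\]
identifying $\G^\d|_\D$ as the left adjoint to $\tilde\G_\d$; composing with $L \dashv \iota$ and invoking uniqueness of adjoints then yields the factorization. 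The converse follows by taking right adjoints. Next, the precomposition functor $L^*: [\D, \V] \to [\C, \V]$, $F \mapsto F \circ L$, is fully faithful (since $\iota^* L^* \simeq \mathrm{id}$---the standard localization fact) and carries $[\D, \V]^{\mathrm{L}}$ into $[\C, \V]^{\mathrm{L}}$. The embedding $\D^\vee \hookrightarrow \C^\vee \simeq [\C, \V]^{\mathrm{L}}$ then factors as $\G \mapsto \G^\d|_\D \mapsto L^*(\G^\d|_\D) \simeq \G^\d$, with essential image exactly $L^*([\D, \V]^{\mathrm{L}})$. Since $L^*$ is fully faithful, $\G \mapsto \G^\d|_\D$ must be an equivalence $\D^\vee \xrightarrow{\sim} [\D, \V]^{\mathrm{L}}$, which is the right non-degeneracy of the restricted bifunctor.

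\textbf{Plan for (2).} Part (1) combined with Lemma~\ref{pres. dual} gives that both $\D^\vee \simeq [\D, \V]^{\mathrm{L}}$ and $\C^\vee \simeq [\C, \V]^{\mathrm{L}}$ are presentable, and translates the inclusion $\D^\vee \hookrightarrow \C^\vee$ into $L^*: [\D, \V]^{\mathrm{L}} \to [\C, \V]^{\mathrm{L}}$. I would then apply the adjoint functor theorem (\cite[Cor.~5.5.2.9]{HTT}): it suffices to check that $L^*$ preserves small colimits. A pointwise colimit of colimit-preserving functors is colimit-preserving (colimits commute with colimits), so $[\C, \V]^{\mathrm{L}} \subseteq [\C, \V]$ is closed under pointwise colimits and colimits in $[\C, \V]^{\mathrm{L}}$ coincide with pointwise ones; likewise for $[\D, \V]^{\mathrm{L}}$. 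Since precomposition $L^*$ manifestly preserves pointwise colimits, it preserves colimits in the subcategories, producing the desired right adjoint.

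\textbf{Main obstacle.} The delicate step is the identification $\D^\vee \simeq [\D, \V]^{\mathrm{L}}$ in (1). Making precise, in $\infty$-categorical language, that $\G^\d|_\D$ is the left adjoint of $\tilde\G_\d$ when $\G \in \D^\vee$---and hence that $\G^\d \simeq (\G^\d|_\D) \circ L$ by composition of adjunctions and uniqueness---is the main piece of bookkeeping through the equivalence $\e$. Once this is pinned down, the fully faithfulness of $L^*$ (standard for localizations) and the adjoint functor theorem handle everything else.
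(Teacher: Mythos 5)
Your proposal is correct, and it is essentially the mirror image of the paper's proof, carried out on the left-adjoint side instead of the right-adjoint side. For (1), the paper sends $\G\in\D^\vee$ to the corestriction $\G_\ddagger:\V\to\D$ of $\G_\d$, deduces full faithfulness of $(-)_\ddagger$ from the square comparing $([\V,\D]^{\mathrm{R}})^{\mathrm{op}}$ with $([\V,\C]^{\mathrm{R}})^{\mathrm{op}}$, and gets essential surjectivity from the fact that postcomposing a right adjoint $\V\to\D$ with the (right adjoint) inclusion $\iota:\D\hookrightarrow\C$ is again a right adjoint; under the equivalence $\e$, the paper's inclusion $[\V,\D]^{\mathrm{R}}\hookrightarrow[\V,\C]^{\mathrm{R}}$ is precisely your $L^*$ restricted to left adjoints, so the two arguments verify the same facts in dual form. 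What your route buys is a direct description of the equivalence $\D^\vee\simeq[\D,\V]^{\mathrm{L}}$ as $\G\mapsto\G^\d|_\D$ via the factorization $\G^\d\simeq(\G^\d|_\D)\circ L$; what it costs is the extra input that $L^*$ is fully faithful for a reflective localization, which the paper gets for free from full faithfulness of $\iota$. Similarly for (2): the paper checks that $[\V,\D]^{\mathrm{R}}\to[\V,\C]^{\mathrm{R}}$ preserves small limits (computed objectwise, using that $\D$ is closed under limits in $\C$), while you check that $L^*$ preserves pointwise colimits of left adjoints; both then conclude by \cite[Cor.~5.5.2.9]{HTT}. One point worth tightening: your "converse follows by taking right adjoints" is only immediate when the factor $F:\D\to\V$ is already known to admit a right adjoint (then $F\circ L$ has right adjoint $\iota$ composed with the right adjoint of $F$, which lands in $\D$); for a general factorization one should instead argue that $\G_\d v$ is $L$-local. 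Since in your proof the converse is only invoked for $F'\in[\D,\V]^{\mathrm{L}}$, this is harmless.
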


\begin{proof}
(1) For $\G \in \D^\vee$, the functor $\G_\d : \V \to \C$ induces $\G_\ddagger : \V \to \D$. Since $\G_\ddagger$ is right adjoint to $\G^\d|_\D$, we obtain $(-)_{\ddagger} : \D^\vee \to ([\V,\C]^\mathrm{R})^{\mathrm{op}}$. By the $\i$-commutative diagram
\[
\begin{CD}
\D @>{(-)_{\ddagger}}>> \left([\V,\D]^\mathrm{R}\right)^{\mathrm{op}} \\
@VVV @VVV \\
\C @>>{\cong}> \left([\V,\C]^\mathrm{R}\right)^{\mathrm{op}},
\end{CD}
\]
the functor $(-)_{\ddagger}$ is fully faithful. The composition 
\[
{G' : \V \xrightarrow{G} \D \hookrightarrow \C}
\]
is contained in $[\V,\C]^\mathrm{R}$ for all $G \in [\V,\D]^\mathrm{R}$. Thus there exists $\G \in \C^\vee$ such that $\G_\d$ is equivalent to $G'$ in $([\V,\C]^\mathrm{R})^{\mathrm{op}}$. Then we have $\G_\ddagger \cong G$. This means that the functor $(-)_{\ddagger}$ is essentially surjective and thus equivalence of $\i$-categories. Therefore, $\D \times \D^\vee \to \V$ induces an equivalence of $\i$-categories $\D^\vee \xrightarrow{\cong} ([\V,\D]^\mathrm{R})^{\mathrm{op}} \xrightarrow{\cong} [\D,\V]^\mathrm{L}$.

(2) By Lemma \ref{pres. dual}, $\C^\vee$ and $\D^\vee$ are presentable. Thus by \cite[Cor.~5.5.2.9]{HTT}, it suffices to show that $\D^\vee$ is closed under small colimits in $\C^\vee$. Since limits of right adjoint functors are computed objectwise and $\D$ is closed under small limits in $\C$, the functor $[\V,\D]^\mathrm{R} \to [\V,\C]^\mathrm{R}$ preserves all small limits. Therefore, the assertion follows from the equivalences $(\C^\vee)^{\mathrm{op}} \cong [\V,\C]^\mathrm{R}$ and $(\D^\vee)^{\mathrm{op}} \cong [\V,\D]^\mathrm{R}$.
\end{proof}

Next, we consider the dual of the localization of $\C$ by a set of morphisms $S$. Let $\C^{S\mathchar`-\mathrm{loc}}$ be the full subcategory of $\C$ consisting of $S$-local objects (see \cite[Def.~5.5.4.1]{HTT}). We write $\C_{S\mathchar`-\mathrm{col}^\vee}$ for the full subcategory of $\C^\vee$ consisting of objects $\G$ such that for every $f : \F \to \F'$ in $S$, the induced morphism
\[
\p{f}{\G} : \p{\F}{\G} \to \p{\F'}{\G}
\]
is an equivalence in $\V$.

\begin{prop}\label{prop dually coloc}
Let $S$ be a set of morphisms in $\C$. Then $(\C^{S\mathchar`-\mathrm{loc}})^\vee = \C_{S\mathchar`-\mathrm{col}^\vee}$.
\end{prop}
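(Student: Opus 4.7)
The plan is to unfold both definitions and connect them via two standard moves: the adjunction $\G^\d \dashv \G_\d$ and the Yoneda lemma in $\V$.

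First, I would unfold the left-hand side. By definition, $\G \in (\C^{S\text{-loc}})^\vee$ iff $\G_\d v \in \C^{S\text{-loc}}$ for every $v \in \V$. By the definition of $S$-locality, this is equivalent to saying that for every $f : \F \to \F'$ in $S$ and every $v \in \V$, the induced map
\[
\sHom_\C(\F',\G_\d v) \longrightarrow \sHom_\C(\F,\G_\d v)
\]
is an equivalence in $\igrpd$.

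Next, I would apply the adjunction $\G^\d \dashv \G_\d$ (which holds by the very construction $\G_\d = \e(\G^\d)$ from the earlier discussion). This yields a natural equivalence $\sHom_\C(\F,\G_\d v) \simeq \sHom_\V(\p{\F}{\G},v)$, compatible with morphisms in $\F$, so the display above is an equivalence iff
\[
\sHom_\V(\p{\F'}{\G},v) \longrightarrow \sHom_\V(\p{\F}{\G},v)
\]
is an equivalence in $\igrpd$ for every $v \in \V$; moreover this last map is precisely precomposition with $\p{f}{\G}$.

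Finally, by the Yoneda lemma in $\V$, the condition that precomposition with $\p{f}{\G}$ is an equivalence for every $v \in \V$ is equivalent to $\p{f}{\G}$ itself being an equivalence in $\V$. But this is exactly the defining condition of $\G \in \C_{S\text{-col}^\vee}$. Chaining these equivalences gives the claimed equality of full subcategories. There is no real obstacle here; the only thing to be a bit careful about is the naturality of the hom-equivalence in $\F$, which is automatic from the adjunction but is needed in order to identify the transition map on the $\V$-side with $\p{f}{\G}$.
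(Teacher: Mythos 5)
Your proof is correct and follows essentially the same route as the paper's: unfold the definition of $S$-locality of $\G_\d v$, transport it across the adjunction $\G^\d \dashv \G_\d$ to a statement about $\sHom_\V(\p{f}{\G},v)$, and conclude by the Yoneda lemma in $\V$ that this holds for all $v$ exactly when $\p{f}{\G}$ is an equivalence. The naturality point you flag at the end is indeed the only thing to watch, and it is handled the same way in the paper.
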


\begin{proof}
Let $f : \F \to \F'$ be a morphism in $\C$ and let $\G \in \C^\vee$. It suffices to show that the following two conditions are equivalent.
\begin{enumerate}
\item\label{1 of d col} The morphism $\G^\d f : \G^\d\F \to \G^\d\F'$ is an equivalence in $\V$.

\item\label{2 of d col} The morphism $\sHom_\C(f,\G_\d v) : \sHom_\C(\F',\G_\d v) \to \sHom_\C(\F,\G_\d v)$ is an equivalence in $\igrpd$ for all $v \in \V$.
\end{enumerate}
By the $\i$-Yoneda lemma, the condition \eqref{1 of d col} is equivalent to the following.
\begin{itemize}
\item[(\ref{1 of d col}')] The morphism $\sHom_\C(\G^\d f,v) : \sHom_\C(\G^\d\F,v) \to \sHom_\C(\G^\d\F',v)$ is an equivalence in $\igrpd$ for all $v \in \V$.
\end{itemize}
Since $\G^\d$ is left adjoint to $\G_\d$, the equivalence $\sHom_\C(\G^\d f,v) \cong \sHom_\C(f,\G_\d v)$ shows that (\ref{1 of d col}') $\Leftrightarrow$ (\ref{2 of d col}).
\end{proof}

\begin{rem}\label{rmk dloc}
When $\C$ is presentable and $S$ is small, $\C^{S\mathchar`-\mathrm{loc}}$ is reflective and can be regarded as the localization $S^{-1}\C$ of $\C$ by $S$ (see \cite[Prop.~5.5.4.2 and 5.5.4.15]{HTT}). Therefore, we obtain a right non-degenerate bifunctor $S^{-1}\C \times \C_{S\mathchar`-\mathrm{col}^\vee} \to \V$ by Proposition \ref{loc rndp}.
\end{rem}

\section{Pairing between presheaves and precosheaves}\label{PPP}

Throughout this section, we fix a small simplicial set $\X$ and (the underlying $\i$-category of) a symmetric closed monoidal $\i$-category $\V$ which is complete and cocomplete. We write $\psh(\X,\V) = [\X^{\mathrm{op}},\V]$ and $\pcsh(\X,\V) = [\X,\V]$. Our aim of this section is to give a canonical right non-degenerate bifunctor $\psh(\X,\V) \times \pcsh(\X,\V)\to \V$. Let $\otimes$, $\iHom$ and $\1$ denote the monidal product, the internal hom and the monoidal unit of $\V$, respectively. Let $r_*$ be the covariant functor $\V \to \igrpd$ represented by $\1$ and $r^*$ be the functor $\igrpd \to \V$ defined by $S_\bu \mapsto \mathop{\mathrm{colim}}(S_\bu \to \{\1\} \hookrightarrow \V)$. The pair $(r^*,r_*)$ is a variation of free-forgetful adjunctions.

\begin{lem}\label{adj r}
The functor $r_*$ is right adjoint to $r^*$.
\end{lem}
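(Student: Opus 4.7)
The plan is to establish the adjunction by exhibiting, for $S \in \igrpd$ and $v \in \V$, a natural equivalence
\[
\sHom_\V(r^* S, v) \simeq \sHom_{\igrpd}(S, r_* v).
\]

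First I would unpack the definition: $r^* S = \colim{s \in S} \1$ is the colimit of the constant $S$-indexed diagram at the monoidal unit $\1 \in \V$. Since the functor $\sHom_\V(-, v) : \V^{\mathrm{op}} \to \igrpd$ converts colimits into limits (a formal consequence of the $\i$-categorical Yoneda lemma, since $v$ is corepresentable and mapping spaces commute with limits in $\igrpd$), we obtain
\[
\sHom_\V(r^* S, v) \simeq \lim_{s \in S} \sHom_\V(\1, v) = \lim_S \Delta_{r_* v},
\]
naturally in $v$; here $\Delta_{r_* v} : S \to \igrpd$ is the constant diagram with value $r_* v$.

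Second, I would invoke the standard identification of the limit of a constant diagram in $\igrpd$ with an internal mapping space: for any $\i$-groupoid $S$ and any $X \in \igrpd$, one has $\lim_S \Delta_X \simeq \sHom_{\igrpd}(S, X)$ (this is the fact that $\igrpd$ is cotensored over itself by its internal hom, with cotensors computing limits of constant diagrams, and it can be cited from \cite{HTT}). Applying this with $X = r_* v$ and composing with the previous equivalence yields the desired natural equivalence in both $S$ and $v$, establishing the adjunction $r^* \dashv r_*$.

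The only real difficulty is bookkeeping references for the two invoked generalities; both are completely standard in $\i$-category theory. A slicker but heavier alternative would be to use the universal property of $\igrpd$ as the free $\i$-category with small colimits on a point: this characterizes $r^*$ as the essentially unique colimit-preserving functor $\igrpd \to \V$ sending the point to $\1$, after which the adjoint functor theorem (applied using the presentability-style hypotheses on $\V$) produces the right adjoint and the evaluation formula $r_* v \simeq \sHom_\V(\1, v)$ follows tautologically. I would prefer the direct calculation above because it avoids invoking presentability, which is not assumed in this section.
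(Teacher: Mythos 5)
Your proof is correct and follows essentially the same strategy as the paper's: both establish the adjunction by passing $\sHom_\V(-,v)$ through the colimit defining $r^*S$ and identifying the resulting limit of the constant diagram at $r_*v = \sHom_\V(\1,v)$ with $\sHom_{\igrpd}(S,r_*v)$. The only difference is that the paper first decomposes $S_\bu$ simplicially into its discrete levels $S_n$, where the cotensor identification reduces to an elementary product of copies of $\sHom_\V(\1,v)$, and then assembles over $\Delta^{\mathrm{op}}$ using that $r^*$ preserves colimits, whereas you invoke the cotensor formula $\lim_S \Delta_X \simeq \sHom_{\igrpd}(S,X)$ directly; both routes are valid and rest on the same underlying facts.
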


\begin{proof}
For each $S_\bu \in \igrpd$, $v \in \V$ and $n \geq 0$, there exist equivalences
\[
\sHom_\V(r^*S_n,v) \cong \sHom_\V\left(\prod_{s \in S_n}\1,v\right) \cong \coprod_{s \in S_n} \sHom_\V(\1,v) \cong \sHom_{\igrpd}(S_n,r_*v).
\]
Since $S_\bu$ is equivalent to the colimit of the functor $\Delta \to \igrpd$, $[n] \mapsto S_n$, and $r^*$ preserves colimits, we obtain an equivalence
\[
\sHom_\V(r^*S_\bu,v) \cong \sHom_{\igrpd}(S_\bu,r_*v)
\]
which is functorial in both $S_\bu$ and $v$.
\end{proof}

Let $\yo^{\i}$ be the $\i$-Yoneda embedding $\X \hookrightarrow \P(\X)$. By Lemma \ref{adj r}, $r^*$ and $r_*$ induce an adjunction
\[
\overline{r} : \P(\X) \rightleftarrows \psh(\X,\V) : \underline{r}.
\]
We define $\yo^\V$ as the composition $\overline{r} \circ \yo^\i$. We give examples of $r^*$ and $r_*$ for some $\V$.

\begin{examp}\label{ex r_* r^*}
(1) Assume that $\V = \igrpd$ equipped with the cartesian monoidal structure. Then $r^*$ and $r_*$ are equivalent to the identity. Thus we obtain that $\yo^\V \cong \yo^\i$.

(2) Assume that $\V$ is the full subcategory category $\igrpd_{\leq n}$ of $\igrpd$ consisting of $n$-truncated objects for $n \geq 0$. Then $r_*$ is the inclusion $\igrpd_{\leq n} \hookrightarrow \igrpd$ and $r^*$ is the truncation functor.

(3) Let $\R$ be a commutative unital ring. Assume that $\V$ is the derived $\i$-category $\D(\R)$ of $\R$ (see \cite{HA}). Recall that $\D(\R)$ has a monoidal structure induced by derived tensor products. Under this monoidal structure, we obtain that $r_*(C_{\bu}) \cong \mathrm{DK}(\tau_{\geq 0}C_{\bu})$ for each $C_{\bu} \in \D(\R)$, where $\mathrm{DK}$ means the Dold-Kan correspondence and $\tau_{\geq 0}$ means the truncation. Thus $r^*$ coincides with the functor of singular chain complexes.

(4) Assume that $\V$ is the stable $\i$-category of spectra $\mathbf{Sp}$. Then $\V$ has a symmetric monoidal structure given by smash products (see \cite[\S 4.8.2]{HA}). Since $\1$ is the sphere spectrum, $r^*$ is the suspension ${\Sigma^\i : \igrpd \to \mathbf{Sp}}$ and $r_*$ is the looping $\Omega^\i : \mathbf{Sp} \to \igrpd$.
\end{examp}

\begin{lem}\label{riHom = sHom}
There exists an equivalence $\sHom_\V \cong r_* \circ \iHom$ in $[\V^{\mathrm{op}} \times \V,\igrpd]$.
\end{lem}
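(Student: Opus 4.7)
The plan is to extract this from the universal property of the internal hom together with the unit axiom of the monoidal structure. By definition of a symmetric closed monoidal $\i$-category, the functor $(-) \otimes v : \V \to \V$ is left adjoint to $\iHom(v,-) : \V \to \V$, and the adjunction is natural in $v$; more precisely, there is a natural equivalence
\[
\sHom_\V(u \otimes v, w) \cong \sHom_\V(u, \iHom(v,w))
\]
in $[\V^{\mathrm{op}} \times \V^{\mathrm{op}} \times \V, \igrpd]$.

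First I would specialize $u = \1$. On the right, by the definition of $r_*$ as the functor corepresented by the monoidal unit, we obtain $\sHom_\V(\1, \iHom(v,w)) \cong r_* \iHom(v,w)$ naturally in $(v,w)$. On the left, the left unitor $\1 \otimes v \xrightarrow{\cong} v$ of the symmetric monoidal structure induces a natural equivalence $\sHom_\V(\1 \otimes v, w) \cong \sHom_\V(v,w)$. Composing these two natural equivalences yields the desired equivalence $\sHom_\V(v,w) \cong r_* \iHom(v,w)$.

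To deliver this as an equivalence in $[\V^{\mathrm{op}} \times \V, \igrpd]$, I would phrase each of the three natural equivalences above as a morphism in the appropriate functor $\i$-category and then paste them. Concretely: the tensor-hom adjunction gives an equivalence of bifunctors in the variables $(v,w)$ once $u$ has been fixed to $\1$ via restriction along $\{\1\} \hookrightarrow \V^{\mathrm{op}}$; the unitor gives a natural equivalence of functors $\V \to \V$ which, when composed with $\sHom_\V(-,w)$, is natural in $w$ as well.

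I do not expect a serious obstacle: the argument is a formal consequence of the axioms of a closed symmetric monoidal $\i$-category and of the definition of $r_*$. The one subtlety to be careful about is not confusing the pointwise statement with the statement in the functor category; working with the full adjunction equivalence in $[\V^{\mathrm{op}} \times \V^{\mathrm{op}} \times \V, \igrpd]$ from the outset, and only then restricting, avoids this.
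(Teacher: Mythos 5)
Your proposal is correct and follows essentially the same route as the paper: both identify $r_*\iHom(v,w)$ with $\sHom_\V(\1,\iHom(v,w))$, apply the tensor--hom adjunction, and cancel the unit via the unitor (the paper uses $v\otimes\1\cong v$ where you use $\1\otimes v\cong v$, an immaterial difference given symmetry). Your extra care about phrasing the equivalences in the functor $\i$-category before restricting is a reasonable elaboration of what the paper compresses into ``functorial in both $v$ and $w$.''
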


\begin{proof}
For each $v,w \in \V$, there exist equivalences
\[
r_*\iHom(v,w) = \sHom_{\V}(\1,\iHom(v,w)) \cong \sHom_{\V}(v \otimes \1,w) \cong \sHom_{\V}(v,w)
\]
which are functorial in both $v$ and $w$.
\end{proof}

For a bifunctor of $\i$-categories $F : \C^{\mathrm{op}} \times \C \to \D$, we write $\int_{c \in \C}F(c,c)$ (resp.~$\int^{c \in \C}F(c,c)$) for the $\i$-categorical end (resp.~coend) of $F$ introduced by Glasman \cite[Def.~2.2]{Gla}. Since ends (resp.~coends) are defined as a limit (resp.~colimit), these commute with right (resp.~left) adjoint functors. We define a bifunctor $\iHom' : \psh(\X,\V)^{\mathrm{op}} \times \psh(\X,\V) \to \V$ by
\[
\iHom'(\F,\F') = \int_{U \in \X} \iHom(\F(U),\F'(U))
\]
for $\F,\F' \in \psh(\X,\V)$. The next lemma is a variation of Yoneda's lemma for $\iHom'$.

\begin{lem}\label{yoneda ihom}
In $[\X^{\mathrm{op}} \times \psh(\X,\V), \V]$, the functor $(U,\F) \mapsto \F(U)$ is equivalent to
\[
(U,\F) \mapsto \iHom'(\yo^\V(U),\F).
\]
\end{lem}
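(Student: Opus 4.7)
The plan is a Yoneda-style reduction to the classical $\i$-Yoneda lemma for $\P(\X)$: testing against all $v \in \V$, it suffices to exhibit a natural equivalence
\begin{equation*}
\sHom_\V(v, \iHom'(\yo^\V(U), \F)) \cong \sHom_\V(v, \F(U))
\end{equation*}
trifunctorial in $(v, U, \F) \in \V \times \X^{\mathrm{op}} \times \psh(\X,\V)$, and then invoke the $\i$-Yoneda lemma in $\V$ to conclude.

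I would unwind the left-hand side in three steps. First, commute $\sHom_\V(v, -)$ past the end defining $\iHom'$; this is legitimate because $\sHom_\V(v, -)$ is a right adjoint and so preserves limits. Second, apply the defining adjunction of the internal hom, $\sHom_\V(v, \iHom(x, y)) \cong \sHom_\V(v \otimes x, y)$, inside the integrand. Third, use the definition $\yo^\V = \overline{r} \circ \yo^\i$ together with the formula for $r^*$ to compute
\begin{equation*}
\yo^\V(U)(V) = r^*(\sHom_\X(V, U)) \cong \colim_{s \in \sHom_\X(V,U)} \1;
\end{equation*}
since $v \otimes (-)$ preserves colimits, this yields $v \otimes \yo^\V(U)(V) \cong \colim_{\sHom_\X(V,U)} v$, hence
\begin{equation*}
\sHom_\V(v \otimes \yo^\V(U)(V), \F(V)) \cong \sHom_{\igrpd}(\sHom_\X(V, U), \sHom_\V(v, \F(V))).
\end{equation*}

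Setting $G(V) := \sHom_\V(v, \F(V))$, the three steps combine to identify the left-hand side with $\int_{V \in \X} \sHom_{\igrpd}(\sHom_\X(V, U), G(V))$, which is the standard end formula for $\sHom_{\P(\X)}(\yo^\i(U), G)$. The classical $\i$-Yoneda lemma for $\P(\X)$ then identifies this mapping space with $G(U) = \sHom_\V(v, \F(U))$, as desired.

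The main obstacle is not the content of any individual step but coherence: upgrading these pointwise equivalences to a bifunctorial (in fact trifunctorial) equivalence. I expect this to follow formally from the naturality of limit-preservation for $\sHom_\V(v,-)$, the $\otimes$-$\iHom$ adjunction, bifunctoriality of the end construction, and $\i$-Yoneda, but tracking these coherences in the $\i$-categorical setting is where the real care is needed.
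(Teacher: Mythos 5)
Your proposal is correct and follows essentially the same route as the paper: test against $v \in \V$, commute $\sHom_\V(v,-)$ past the end, reduce the integrand to $\sHom_{\igrpd}(\yo^\i(U)(V),\sHom_\V(v,\F(V)))$, recognize the resulting end as $\sHom_{\P(\X)}(\yo^\i(U),\sHom_\V(v,\F(-)))$, and conclude by the $\i$-Yoneda lemma in $\P(\X)$ and then in $\V$. The only (harmless) difference is in the middle step, where you unwind $\yo^\V(U)(V)=r^*(\sHom_\X(V,U))$ as a colimit of copies of $\1$ and compute directly, whereas the paper instead passes through $\sHom_\V(\yo^\V(U)(V),\iHom(v,\F(V)))$ and invokes the adjunction $r^*\dashv r_*$ together with the identity $\sHom_\V\cong r_*\circ\iHom$ (Lemmas \ref{adj r} and \ref{riHom = sHom}).
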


\begin{proof}
Since ends are defined as a limit, these commute with hom-functors. Thus we obtain an equivalence
\[
\sHom_{\V}(v,\iHom'(\yo^\V(U),\F)) \cong \int_{V \in \X} \sHom_\V(v,\iHom(\yo^\V(U)(V),\F(V)))
\]
for each $U \in \X$, $\F \in \psh(\X,\V)$ and $v \in \V$. We have
\begin{align*}
\sHom_\V(v,\iHom(\yo^\V(U)(V),\F(V))) &\cong \sHom_\V(v \otimes \yo^\V(U)(V), \F(V)) \\
&\cong \sHom_\V(\yo^\V(U)(V),\iHom(v,\F(V))).
\end{align*}
By Lemmas \ref{adj r} and \ref{riHom = sHom}, we also have
\begin{align*}
\sHom_\V(\yo^\V(U)(V),\iHom(v,\F(V))) &\cong \sHom_{\igrpd}(\yo^\i(U)(V),r_*\iHom(v,\F(V))) \\
&\cong \sHom_{\igrpd}(\yo^\i(U)(V),\sHom_\V(v,\F(V))).
\end{align*}
Hence, we obtain
\[
\sHom_{\V}(v,\iHom(\yo^\V(U),\F)) \cong \int_{V \in \X} \sHom_{\igrpd}(\yo^\i(U)(V),\sHom_\V(v,\F(V))).
\]
The right-hand side is equivalent to $\sHom_{\P(\X)}(\yo^\i(U),\sHom_\V(v,\F(-)))$ by \cite[Prop.~2.3]{Gla}. Therefore, the $\i$-Yoneda lemma in $\X$ gives an equivalence
\[
\int_{V \in \X} \sHom_\V(\yo^\i(U)(V),\sHom_\V(v,\F(V))) \cong \sHom_\V(v,\F(U)).
\]
Applying the $\i$-Yoneda lemma in $\V$ to the equivalence
\[
\sHom_{\V}(v,\iHom(\yo^\V(U),\F)) \cong \sHom_\V(v,\F(U)),
\]
we obtain the assertion.
\end{proof}

\begin{defi}\label{ppp}
We define a bifunctor $\p{\cdot}{\cdot} : \psh(\X,\V) \times \pcsh(\X,\V) \to \V$ by
\begin{equation}\label{pppeq}
\p{\F}{\G} = \int^{U \in \X} \F(U) \otimes \G(U)
\end{equation}
for $\F \in \psh(\X,\V)$ and $\G \in \pcsh(\X,\V)$. We write $\G^\d$ for the functor $\p{-}{\G} : \psh(\X,\V) \to \V$, and $\G_\d$ for the functor $\V \to \psh(\X,\V)$ defined by $v \mapsto \iHom(\G(-),v)$.
\end{defi}

We are interested in the non-degeneracy of the paring $\p{\cdot}{\cdot}$, for which our main result is Proposition \ref{psh-pcsh nd}. We first prepare some lemmas.

\begin{lem}\label{adj G_d G^d}
In $[\psh(\X,\V)^{\mathrm{op}} \times \pcsh(\X,\V)^{\mathrm{op}} \times \V, \V]$, the functor
\[
(\F,\G,v) \mapsto \iHom(\p{\F}{\G},v)
\]
is equivalent to
\[
(\F,\G,v) \mapsto \iHom'(\F,\G_\d v).
\]
\end{lem}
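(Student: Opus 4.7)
The plan is to exhibit the equivalence by a short string of natural equivalences in $\V$, obtained from the closed symmetric monoidal structure together with the fact that $\iHom(-,v)$ converts coends into ends. I will work with the left-hand side and massage it into the right-hand side; functoriality in $(\F,\G,v)$ will be automatic because every step I use is natural in all three variables.

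First, recall that in any symmetric closed monoidal $\i$-category $\V$ the contravariant functor $\iHom(-,v) : \V^{\mathrm{op}} \to \V$ is a right adjoint (to the same functor in the opposite direction, via the self-adjunction $\sHom_\V(a,\iHom(b,v)) \cong \sHom_\V(b,\iHom(a,v))$ coming from the commutativity of $\otimes$), hence sends colimits in $\V$ to limits in $\V$. Since the coend defining $\p{\F}{\G}$ is by construction a colimit in $\V$ (Glasman \cite[Def.~2.2]{Gla}), $\iHom(-,v)$ carries it to the corresponding end; concretely,
\[
\iHom\!\left(\int^{U \in \X} \F(U)\otimes \G(U),\,v\right) \cong \int_{U \in \X} \iHom(\F(U)\otimes \G(U),\,v).
\]
This is the first, and main, step.

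Next, I apply the tensor–hom adjunction pointwise in $U$. The closed monoidal structure of $\V$ gives a natural equivalence
\[
\iHom(\F(U)\otimes \G(U),\,v) \cong \iHom(\F(U),\,\iHom(\G(U),v))
\]
of bifunctors $\V^{\mathrm{op}} \times \V^{\mathrm{op}} \times \V \to \V$ in $(\F(U),\G(U),v)$. Plugging this into the end and using that $\G_\d v$ is by definition the presheaf $U \mapsto \iHom(\G(U),v)$, the end becomes
\[
\int_{U \in \X} \iHom(\F(U),\,\G_\d v(U)),
\]
which is exactly $\iHom'(\F,\G_\d v)$.

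Chaining these equivalences produces the desired equivalence $\iHom(\p{\F}{\G},v) \cong \iHom'(\F,\G_\d v)$, and naturality in $(\F,\G,v)$ follows because (i) the coend is functorial in the integrand, (ii) $\iHom(-,-)$ is a bifunctor on $\V^{\mathrm{op}}\times\V$, and (iii) the tensor–hom adjunction and the colimit-to-limit conversion are natural transformations of functors to $\V$. The only step that requires any care is the first one, where one must ensure that the $\i$-categorical coend in the sense of Glasman is indeed a colimit that $\iHom(-,v)$ converts to the corresponding end; once this is observed, the rest is a pair of routine adjunctions.
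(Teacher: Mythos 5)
Your proposal is correct and follows the same route as the paper: pull $\iHom(-,v)$ inside the coend (turning it into an end, justified by the fact that $\iHom(-,v)$ sends colimits to limits), apply the tensor--hom adjunction pointwise, and identify the result with $\iHom'(\F,\G_\d v)$ by the definition of $\G_\d$. Your extra remark justifying why the coend-to-end conversion is legitimate is a welcome elaboration of a step the paper leaves implicit.
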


\begin{proof}
For each $\F \in \psh(\X,\V)$, $\G \in \pcsh(\X,\V)$ and $v \in \V$, there exist equivalences
\begin{align*}
\iHom(\p{\F}{\G},v) &\cong \int_{U \in \X} \iHom(\F(U) \otimes \G(U),v) \\
&\cong \int_{U \in \X} \iHom(\F(U),\iHom(\G(U),v)) \\
&= \int_{U \in \X} \iHom(\F(U),(\G_\d v)(U)) \\
&= \iHom'(\F,\G_\d v)
\end{align*}
which are functorial in $\F$, $\G$ and $v$.
\end{proof}

By Lemmas \ref{riHom = sHom} and \ref{adj G_d G^d}, we see that $\G^\d$ is left adjoint to $\G_\d$. This means that the pairing \eqref{pppeq} induces a functor $(-)^\d : \pcsh(\X,\V) \to [\psh(\X,\V),\V]^\mathrm{L}$. Let $\mathrm{Res}$ be the functor $[\psh(\X,\V),\V] \to \pcsh(\X,\V)$ defined by $F \mapsto F \circ \yo^\V$. We prove that $(-)^\d$ is an essential section of $\mathrm{Res}$.

\begin{lem}\label{res section}
The composition
\[
\pcsh(\X,\V) \xrightarrow{(-)^\d} [\psh(\X,\V),\V] \xrightarrow{\mathrm{Res}} \pcsh(\X,\V)
\]
is equivalent to the identity of $\pcsh(\X,\V)$. In other words, we have
\[
\p{\yo^\V(U)}{\G} \cong \G(U)
\]
which is functorial in $U \in \X$ and $\G \in \pcsh(\X,\V)$.
\end{lem}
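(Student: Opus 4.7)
My plan is to reduce the claim to an application of the $\i$-Yoneda lemma in $\V$, using Lemmas \ref{adj G_d G^d} and \ref{yoneda ihom} as the key inputs; naturality in $U$ and $\G$ is then guaranteed by performing the substitutions inside appropriate functor $\i$-categories.

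First, I would specialize Lemma \ref{adj G_d G^d} by plugging in $\F = \yo^\V(U)$. Because the equivalence in that lemma lives in the functor $\i$-category $[\psh(\X,\V)^{\mathrm{op}} \times \pcsh(\X,\V)^{\mathrm{op}} \times \V, \V]$, precomposition by $(\yo^\V)^{\mathrm{op}} : \X^{\mathrm{op}} \to \psh(\X,\V)^{\mathrm{op}}$ produces a natural equivalence
\[
\iHom(\p{\yo^\V(U)}{\G}, v) \cong \iHom'(\yo^\V(U), \G_\d v)
\]
in $[\X^{\mathrm{op}} \times \pcsh(\X,\V)^{\mathrm{op}} \times \V, \V]$.

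Next, I would apply Lemma \ref{yoneda ihom} with $\F = \G_\d v = \iHom(\G(-), v)$. Since $(\G, v) \mapsto \G_\d v$ defines a functor $\pcsh(\X,\V)^{\mathrm{op}} \times \V \to \psh(\X,\V)$, precomposing Lemma \ref{yoneda ihom} by this functor identifies the right-hand side above with $(\G_\d v)(U) = \iHom(\G(U), v)$, naturally in $(U, \G, v)$. Chaining the two equivalences yields
\[
\iHom(\p{\yo^\V(U)}{\G}, v) \cong \iHom(\G(U), v)
\]
in $[\X^{\mathrm{op}} \times \pcsh(\X,\V)^{\mathrm{op}} \times \V, \V]$.

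Finally, I would apply $r_*$ and invoke Lemma \ref{riHom = sHom} to convert this into a natural equivalence $\sHom_\V(\p{\yo^\V(U)}{\G}, v) \cong \sHom_\V(\G(U), v)$, after which the $\i$-Yoneda lemma in $\V$ (applied in $v$) gives the desired equivalence $\p{\yo^\V(U)}{\G} \cong \G(U)$ in $[\X \times \pcsh(\X,\V), \V]$. The only delicate point is the bookkeeping of naturality through the two substitutions $\F = \yo^\V(U)$ and $\F = \G_\d v$, so that the final equivalence is recorded in the correct functor $\i$-category and not merely objectwise; beyond this there is no substantive obstacle.
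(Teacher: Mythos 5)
Your proposal is correct and follows essentially the same route as the paper's proof: both chain Lemma \ref{adj G_d G^d} with $\F = \yo^\V(U)$ and Lemma \ref{yoneda ihom} with $\F = \G_\d v$ to obtain $\iHom(\p{\yo^\V(U)}{\G},v) \cong \iHom(\G(U),v)$, then pass through Lemma \ref{riHom = sHom} and conclude by the $\i$-Yoneda lemma in $\V$. Your extra attention to recording the equivalences in the correct functor $\i$-categories is a harmless refinement of the same argument.
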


\begin{proof}
For each $\G \in \pcsh(\X,\V)$ and $v \in \V$, there exist equivalences
\begin{align*}
\iHom(\G^\d \circ \yo^\V(-),v) \cong \iHom'(\yo^\V(-),\G_\d v) \cong (\G_\d v)(-) = \iHom(\G(-),v)
\end{align*}
by Lemmas \ref{adj G_d G^d} and \ref{yoneda ihom}. Thus Lemma \ref{riHom = sHom} gives
\[
\sHom_{\V}(\G^\d \circ \yo^\V(-),v) \cong \sHom_{\V}(\G(-),v).
\]
Therefore, the $\i$-Yoneda lemma in $\V$ leads to a functorial equivalence $\mathrm{Res}(\G^\d) \cong \G$.
\end{proof}

We prove a variation of the co-Yoneda lemma.

\begin{lem}\label{coyoneda ihom}
In $[\X^{\mathrm{op}} \times \psh(\X,\V), \V]$, the functor $(V,\F) \mapsto \F(V)$ is equivalent to
\[
V \mapsto \int^{U \in \X} \F(U) \otimes \yo^\V(U)(V)
\]
\end{lem}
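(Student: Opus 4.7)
The strategy mirrors the proof of Lemma \ref{yoneda ihom}: reduce the statement, via the $\i$-Yoneda lemma in $\V$, to an assertion about mapping spaces which can be simplified with the $(r^*,r_*)$-adjunction and then resolved by the $\i$-Yoneda lemma in $\X$.

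Concretely, the plan is to prove that for every $w \in \V$ there is a natural equivalence
\[
\sHom_\V\!\left(\int^{U \in \X} \F(U) \otimes \yo^\V(U)(V),\,w\right) \cong \sHom_\V(\F(V),w).
\]
First, since coends are colimits, $\sHom_\V(-,w)$ converts the coend into an end, producing
\[
\int_{U \in \X} \sHom_\V(\F(U) \otimes \yo^\V(U)(V),\,w).
\]
The tensor--hom adjunction in $\V$ rewrites each integrand as $\sHom_\V(\yo^\V(U)(V),\iHom(\F(U),w))$. Using $\yo^\V = \overline{r} \circ \yo^\i$, so that $\yo^\V(U)(V) = r^*(\yo^\i(U)(V))$, the adjunction $r^* \dashv r_*$ from Lemma \ref{adj r} together with Lemma \ref{riHom = sHom} turns this further into
\[
\sHom_{\igrpd}(\yo^\i(U)(V),\,\sHom_\V(\F(U),w)).
\]

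At this point the end over $U$ is, by \cite[Prop.~2.3]{Gla}, the mapping space in $[\X,\igrpd]$ from the covariant Yoneda object at $V$ to the functor $\sHom_\V(\F(-),w)$. The $\i$-Yoneda lemma applied in $\X$ then collapses this to $\sHom_\V(\F(V),w)$, which completes the chain of natural equivalences. Applying the $\i$-Yoneda lemma in $\V$ one last time yields the desired equivalence of functors $\X^{\mathrm{op}} \times \psh(\X,\V) \to \V$.

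The only real subtlety is bookkeeping the variance: ensuring that every equivalence above is natural in $V$, $\F$, and $w$ simultaneously, and that Glasman's end representation of mapping spaces is being applied to the correct covariant functors $\yo^\i(-)(V)$ and $\sHom_\V(\F(-),w)$ of $U \in \X$. All manipulations are natural because tensor--hom, the $(r^*,r_*)$-adjunction, and the identification in Lemma \ref{riHom = sHom} are themselves natural, so no additional work beyond careful tracking is required; the argument essentially parallels Lemma \ref{yoneda ihom} with ends replaced by coends and covariant Yoneda used in place of contravariant Yoneda.
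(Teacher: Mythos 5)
Your proof is correct, but it takes a more computational route than the paper, which disposes of this lemma in one line: since $\psh(\X,\V) = \pcsh(\X^{\mathrm{op}},\V)$, the claimed equivalence is exactly Lemma \ref{res section} applied to $\X^{\mathrm{op}}$ in place of $\X$ (using that the coend over $\X$ of $\F(U)\otimes\yo^\V(U)(V)$ is the pairing $\p{\yo^\V_{\X^{\mathrm{op}}}(V)}{\F}$ for the opposite site, up to the symmetry of $\otimes$). What you do instead is inline the entire chain of reductions behind that duality trick: reducing to mapping spaces via Yoneda in $\V$, converting the coend to an end under $\sHom_\V(-,w)$, applying tensor--hom, the $(r^*,r_*)$-adjunction together with Lemma \ref{riHom = sHom}, Glasman's identification of ends with mapping spaces in functor categories, and finally the covariant Yoneda lemma in $\X$. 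This is precisely the mirror of the paper's proof of Lemma \ref{yoneda ihom} and all the steps check out, including the variance bookkeeping (the integrand $\sHom_{\igrpd}(\yo^\i(U)(V),\sHom_\V(\F(U),w))$ is indeed a functor $\X^{\mathrm{op}}\times\X\to\igrpd$, so Glasman's Prop.~2.3 applies as you use it). The paper's approach buys brevity and makes the self-duality of the situation transparent; yours is self-contained at this point in the text and avoids having to verify that the pairing and Yoneda embedding for $\X^{\mathrm{op}}$ match up with the displayed coend, which is the one small check the paper leaves implicit.
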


\begin{proof}
Since $\psh(\X,\V) = \pcsh(\X^{\mathrm{op}},\V)$ and $\pcsh(\X,\V) = \psh(\X^{\mathrm{op}},\V)$, Lemma \ref{res section} for $\X^{\mathrm{op}}$ gives a desired equivalence.
\end{proof}

 We write $[\psh(\X,\V),\V]^\spadesuit$ for the full subcategory of $[\psh(\X,\V),\V]$ consisting of functors $F$ such that the composition 
\[
\psh(\X,\V) \times \V \xrightarrow{(F,\mathrm{id})} \V \times \V \xrightarrow{\iHom} \V
\]
is equivalent to
\[
\psh(\X,\V) \times \V \xrightarrow{(\mathrm{id},G)} \psh(\X,\V) \times \psh(\X,\V) \xrightarrow{\iHom'} \V
\]
in $[\psh(\X,\V)\times\psh(\X,\V),\V]$. Lemma \ref{riHom = sHom} shows that $[\psh(\X,\V),\V]^\spadesuit \subseteq [\psh(\X,\V),\V]^\mathrm{L}$.

\begin{lem}\label{st adj otimes}
A functor $F : \psh(\X,\V) \to \V$ belongs to $[\psh(\X,\V),\V]^\spadesuit$ if and only if the following diagram is $\i$-commutative:
\[
\begin{CD}
\V \times \psh(\X,\V) @>\otimes>> \psh(\X,\V) \\
@V(\mathrm{id},F)VV @VVFV \\
\V \times \psh(\X,\V)@>>\otimes> \psh(\X,\V).
\end{CD}
\]
\end{lem}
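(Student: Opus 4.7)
The plan is to prove the biconditional by chasing the end formula for $\iHom'$ through the closed monoidal structure on $\V$, exploiting Hom--tensor adjunction inside the end and the $\i$-Yoneda lemma in $\V$ (Lemma \ref{riHom = sHom}).

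For the forward direction, suppose $F \in [\psh(\X,\V),\V]^\spadesuit$ with witness $G : \V \to \psh(\X,\V)$ realizing $\iHom(F\F, w) \cong \iHom'(\F, Gw)$ naturally in $\F, w$. For $v, w \in \V$ and $\F \in \psh(\X,\V)$ I would compute
\begin{align*}
\iHom(F(v \otimes \F), w)
&\cong \iHom'(v \otimes \F, Gw) \\
&= \int_{U \in \X} \iHom(v \otimes \F(U), Gw(U)) \\
&\cong \int_{U \in \X} \iHom(v, \iHom(\F(U), Gw(U))) \\
&\cong \iHom\!\left(v, \int_{U \in \X} \iHom(\F(U), Gw(U))\right) \\
&\cong \iHom(v, \iHom(F\F, w)) \cong \iHom(v \otimes F\F, w),
\end{align*}
where the $\spadesuit$-equivalence is used in the first and fifth $\cong$, Hom--tensor adjunction in $\V$ is applied pointwise inside the end and then once more outside, and $\iHom(v, -)$ commutes with the end because it is a right adjoint. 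Naturality in $v$, $\F$, $w$ is preserved throughout, so the $\i$-Yoneda lemma in $\V$ forces $F(v \otimes \F) \cong v \otimes F\F$ naturally, which is the commutativity of the stated diagram.

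For the backward direction, suppose $F(v \otimes \F) \cong v \otimes F\F$ naturally. The plan is to construct the witnessing $G$ by setting $Gw := \iHom(F \circ \yo^\V(-), w)$, which is a presheaf in $U$ and functorial in $w$, and then to verify $\iHom(F\F, w) \cong \iHom'(\F, Gw)$. The verification proceeds by first checking the case $\F = u \otimes \yo^\V(U)$, where both sides reduce to $\iHom(u, Gw(U))$ via $\V$-linearity, Hom--tensor adjunction, and Lemma \ref{yoneda ihom}. To extend to arbitrary $\F$, I would invoke the co-Yoneda presentation $\F \cong \int^{U \in \X} \F(U) \otimes \yo^\V(U)$ (Lemma \ref{coyoneda ihom}) and distribute everything through the resulting coend/end.

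The main obstacle is justifying, in this last step, the interchange of $F$ with the co-Yoneda coend: we need $F\F \cong \int^{U} \F(U) \otimes F(\yo^\V(U))$. The key observation is that $\V$-linearity, read as a coherent natural equivalence in both $v$ and $\F$, promotes $F$ to a $\V$-module functor, and such functors preserve $\V$-weighted colimits, of which the co-Yoneda coend is an instance. Once this preservation is in hand, applying $\iHom(-, w)$ to the coend presentation of $F\F$, using that $\iHom(-, w)$ turns colimits into limits, and applying Hom--tensor adjunction inside the end yields $\int_U \iHom(\F(U), \iHom(F(\yo^\V(U)), w)) = \iHom'(\F, Gw)$, completing the backward direction.
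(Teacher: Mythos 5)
Your forward direction is correct and is essentially the paper's argument run through $\iHom$ instead of $\sHom$: unwind $\iHom'$ as an end, apply tensor--hom adjunction pointwise, pull $\iHom(v,-)$ out of the end since it is a right adjoint, and finish with Yoneda. The paper compresses this by quoting the adjunction $\sHom_{\psh(\X,\V)}(v\otimes\F,\G)\cong\sHom_\V(v,\iHom'(\F,\G))$ directly, but the content is identical.

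The backward direction is where you have a genuine gap. You build a candidate $Gw=\iHom(F\circ\yo^\V(-),w)$ and then need the interchange $F(\F)\cong\int^{U\in\X}\F(U)\otimes F(\yo^\V(U))$, which you propose to extract from the slogan that a $\V$-module functor preserves $\V$-weighted colimits. That slogan is false at the level of generality you invoke it: the equivalence $F(v\otimes\F)\cong v\otimes F(\F)$ gives preservation of tensors (copowers) only, while the co-Yoneda coend of Lemma \ref{coyoneda ihom} is a genuine colimit (over a twisted arrow category, in Glasman's formulation), and commuting $F$ past it requires $F$ to preserve that colimit --- an assumption not implied by $\V$-linearity (already for $\V=\set$, preserving copowers $S\cdot x\cong\coprod_{S}x$ does not force preservation of coequalizers). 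The paper sidesteps this entirely by reading the lemma, as its proof and its sole use in Proposition \ref{psh-pcsh nd} make explicit, for $F$ belonging to an adjoint pair $F\dashv G$; nothing is lost since $[\psh(\X,\V),\V]^\spadesuit\subseteq[\psh(\X,\V),\V]^{\mathrm{L}}$ in any case. With $G$ the actual right adjoint, both directions collapse to comparing $\sHom_{\V}(F(v\otimes\F),w)\cong\sHom_{\V}(v,\iHom'(\F,Gw))$ with $\sHom_{\V}(v\otimes F(\F),w)\cong\sHom_{\V}(v,\iHom(F(\F),w))$ and applying Yoneda twice, with no construction of $G$ and no coend interchange. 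If you add the hypothesis that $F$ preserves colimits, your interchange becomes legitimate and your route closes up, but at that point the shorter adjunction argument is already available.
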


\begin{proof}
For $v,w \in \V$, $\F \in \psh(\X,\V)$ and an adjoint pair ${F : \psh(\X,\V) \rightleftarrows \V : G}$, we have $\sHom_{\V}(v \otimes F(\F),w) \cong \sHom_{\V}(v,\iHom(F(\F),w))$ and
\[
\sHom_{\V}(F(v \otimes \F),w) \cong \sHom_{\V}(v \otimes \F,G(w)) \cong \sHom_{\V}(v,\iHom'(\F,G(w))).
\]
Thus the assertion follows from the $\i$-Yoneda lemma in $\V$.
\end{proof}

Now we can prove our non-degeneracy result on the pairing $\p{\cdot}{\cdot}$.

\begin{prop}\label{psh-pcsh nd}
The functor $(-)^\d : \pcsh(\X,\V) \to [\psh(\X,\V),\V]^\spadesuit$ is an equivalence of $\i$-categories and $\mathrm{Res}$ gives its inverse. In particular, when \begin{equation}\label{SL=L}
[\psh(\X,\V),\V]^\spadesuit = [\psh(\X,\V),\V]^\mathrm{L}
\end{equation}
holds, the pairing $\p{\cdot}{\cdot}$ is right non-degenerate.
\end{prop}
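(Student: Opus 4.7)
The plan is to exhibit $(-)^\d$ and $\mathrm{Res}$ as mutually inverse equivalences between $\pcsh(\X,\V)$ and $[\psh(\X,\V),\V]^\spadesuit$; the non-degeneracy assertion will then follow immediately from unwinding definitions.

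First I would check that $(-)^\d$ actually takes values in $[\psh(\X,\V),\V]^\spadesuit$. This is essentially the content of Lemma \ref{adj G_d G^d}: the natural equivalence $\iHom(\G^\d\F,v) \cong \iHom'(\F,\G_\d v)$ is exactly the condition defining $\spadesuit$, with the required auxiliary right adjoint being $\G_\d$. One composite of the desired equivalence is handed to us for free: Lemma \ref{res section} states that $\mathrm{Res} \circ (-)^\d \cong \mathrm{id}_{\pcsh(\X,\V)}$.

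The remaining step is to build a natural equivalence $(-)^\d \circ \mathrm{Res} \cong \mathrm{id}$ on $[\psh(\X,\V),\V]^\spadesuit$, and this is where the $\spadesuit$ hypothesis really gets used. Given $F \in [\psh(\X,\V),\V]^\spadesuit$, set $\G := \mathrm{Res}(F) = F \circ \yo^\V$; the target is a natural equivalence $\G^\d \cong F$. By the co-Yoneda lemma (Lemma \ref{coyoneda ihom}), each $\F \in \psh(\X,\V)$ is canonically the coend $\F \cong \int^{U \in \X} \F(U) \otimes \yo^\V(U)$. Since $[\psh(\X,\V),\V]^\spadesuit \subseteq [\psh(\X,\V),\V]^\mathrm{L}$ (noted just after the definition of $\spadesuit$), $F$ is a left adjoint and hence preserves coends; applying Lemma \ref{st adj otimes} to pull each tensor factor out of $F$ yields
\[
F(\F) \cong \int^{U \in \X} F(\F(U) \otimes \yo^\V(U)) \cong \int^{U \in \X} \F(U) \otimes F(\yo^\V(U)) = \int^{U \in \X} \F(U) \otimes \G(U) = \G^\d(\F),
\]
and each equivalence in the chain is visibly functorial in $\F$ and $F$.

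Combining the two composites gives the equivalence $(-)^\d : \pcsh(\X,\V) \xrightarrow{\cong} [\psh(\X,\V),\V]^\spadesuit$ with inverse $\mathrm{Res}$. For the ``in particular'' clause, the composition $\pcsh(\X,\V) \xrightarrow{(-)^\d} [\psh(\X,\V),\V]^\spadesuit \hookrightarrow [\psh(\X,\V),\V]$ is fully faithful with essential image $[\psh(\X,\V),\V]^\spadesuit$; under hypothesis \eqref{SL=L} this essential image coincides with $[\psh(\X,\V),\V]^\mathrm{L}$, which is by definition right non-degeneracy of $\p{\cdot}{\cdot}$. I expect the only delicate point to be keeping track of naturality in \emph{both} variables $\F$ and $F$ simultaneously (not just pointwise in each), so that what is produced is genuinely an inverse at the level of $\i$-functors; this should be essentially forced by the fact that every ingredient invoked (co-Yoneda, preservation of coends by left adjoints, the $\V$-linearity characterization in Lemma \ref{st adj otimes}) has already been set up bifunctorially in the preceding lemmas.
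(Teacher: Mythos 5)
Your proposal is correct and follows essentially the same route as the paper's proof: Lemma \ref{res section} for one composite, and for the other the co-Yoneda decomposition of $\F$ as a coend, preservation of coends by the left adjoint $F$, and Lemma \ref{st adj otimes} to identify the result with $\mathrm{Res}(F)^\d(\F)$. Your explicit check that $(-)^\d$ lands in the $\spadesuit$ subcategory (via Lemma \ref{adj G_d G^d}) is a point the paper leaves implicit, but otherwise the two arguments coincide.
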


\begin{proof}
By Lemma \ref{res section}, it suffices to show that the composition
\[
[\psh(\X,\V),\V]^\mathrm{L} \xrightarrow{\mathrm{Res}} \pcsh(\X,\V) \xrightarrow{(-)^\d} [\psh(\X,\V),\V]^\mathrm{L}
\]
is equivalent to the identity of $[\psh(\X,\V),\V]^\spadesuit$. For $F \in [\psh(\X,\V),\V]^\mathrm{L}$ and $\F \in \psh(\X,\V)$, we have
\begin{align*}
F(\F) &\cong F\left( \int^{U \in \X} \F(U) \otimes \yo^\V(U)(-) \right) \\
&\cong \int^{U \in \X} F\left( \F(U) \otimes \yo^\V(U)(-) \right)
\end{align*}
by Lemma \ref{coyoneda ihom}. We also have
\[
\mathrm{Res}(F)^\d(\F) = \int^{U \in \X} \F(U) \otimes F\left(\yo^\V(U)\right)
\]
by the definition of $\mathrm{Res}(F)$. Thus the assertion follows from Lemma \ref{st adj otimes}.
\end{proof}

Finally, we give some examples of $\V$ satisfying the condition \eqref{SL=L}.

\begin{examp}\label{ex sl=l}
(1) Clearly, $\igrpd$ and $\igrpd_{\leq n}$ for $n \geq 0$ satisfy \eqref{SL=L}.

(2) For a commutative unital ring $\R$, let $\D_{\geq 0}(\R)$ be the full subcategory of $\D(\R)$ consisting of connective complexes. Then the Dold-Kan correspondence says that $\D_{\geq 0}(\R)$ satisfies \eqref{SL=L}. Moreover, for all $C_{\bullet} \in \D(\R)$ and ${\F \in \psh(\X,\D(\R))}$, we have natural equivalences
\begin{align*}
F(C_{\bullet} \times \F) &\cong F((\colim{n \in \mathbb{Z}}(\tau_{\geq n} C_{\bullet})[n]) \otimes (\colim{m \in \mathbb{Z}} (\tau_{\geq m}\F)[m])) \\
&\cong \colim{n,m \in \mathbb{Z}} F((\tau_{\geq n} C_{\bullet})[n] \otimes (\tau_{\geq m}\F)[m])\\
&\cong \colim{n,m \in \mathbb{Z}} ((\tau_{\geq n} C_{\bullet})[n] \otimes F(\F[m])) \\
&\cong (\colim{n \in \mathbb{Z}}(\tau_{\geq n} C_{\bullet})[n]) \otimes F(\colim{m \in \mathbb{Z}} (\tau_{\geq m}\F[m]))\\
&\cong C_{\bullet} \otimes F(\F)
\end{align*}
by Lemma \ref{st adj otimes} in $\D_{\geq 0}(\R)$. Thus $\D(\R)$ also satisfies \eqref{SL=L}.

(3) The $\i$-category $\mathbf{Sp}$ satisfies \eqref{SL=L}. Indeed, for $E_{\bullet} \in \mathbf{Sp}$, $\F \in \psh(\X,\mathbf{Sp})$ and $F \in [\psh(\X,\mathbf{Sp}),\mathbf{Sp}]^\mathrm{L}$, we have
\begin{align*}
F(E_{\bullet} \wedge \F) &\cong F(\colim{n\in \mathbb{Z}} (\Sigma^{\i} E_n)[-n] \wedge \F) \\
&\cong \colim{n\in \mathbb{Z}} F(\Sigma^{\i} E_n \wedge \F[n])([-n]) \\
&\cong \colim{n\in \mathbb{Z}} F(\colim{E_n} \1 \wedge \F[n])([-n]) \\
&\cong \colim{n\in \mathbb{Z}} (\colim{E_n} F(\1 \wedge \F[n]))([-n]) \\
&\cong \colim{n\in \mathbb{Z}} (\colim{E_n} F(\F[n]))([-n]) \\
&\cong \colim{n\in \mathbb{Z}} (\colim{E_n} \1 \wedge F(\F[n]))[-n] \\
&\cong  ((\colim{n\in \mathbb{Z}} \Sigma^{\i} E_n)[-n]) \wedge F(\F) \\
&\cong E_{\bullet} \wedge F(\F),
\end{align*}
where $\colim{E_n}$ means the colimit of the constant diagram on $E_n$. Similarly, the full subcategory $\mathbf{Sp}_{\geq 0}$ of $\mathbf{Sp}$ consisting of connective spectra also satisfies \eqref{SL=L}.
\end{examp}

\section{Proof of the main result}\label{Pf main}

Let $\X$ be a small $\i$-category equipped with a Grothendieck topology $J$ in the sense of \cite[Def.~6.2.2.1]{HTT} and $\V$ be a presentable symmetric closed monoidal $\i$-category. In this section we prove the main result stated in the introduction. We first give a definition of $\V$-valued (co)sheaves on $\X$. Recall that a family of morphisms $\{U_i \to X\}$ is called a \textit{covering family} of $X \in \X$ if the smallest sieve of $\X_{/X}$ containing $\{U_i \to X\}$ is a covering sieve. For each covering family $\U_0$, we write $\check{C}(\mathcal{\U}_0) : \Delta^{\mathrm{op}} \to \P(\X)$ for its \v{C}ech nerve and $\check{C}^\V(\mathcal{\U}_0) = \overline{r} \circ \check{C}(\mathcal{\U}_0)$.

We define sheaves and cosheaves by using \v{C}ech nerves, instead of more familiar definition via (co)equalizer diagrams. 
It seems that this is more suitable for the theory of $\i$-categories as developed in \cite{HTT}. 
The two definitions agree (Remark \ref{ord cosh}).

\begin{defi}\label{def cosh}
(1) A presheaf $\F \in \psh(\X,\V)$ is called a \textit{sheaf} if for each covering family $\U_0$ of $\X$, the composition
\[
\Delta \xrightarrow{\check{C}^\V(\mathcal{\U}_0)^{\mathrm{op}}} \psh(\X,\V)^{\mathrm{op}} \xrightarrow{\iHom'(-,\F)} \V 
\]
is a limit diagram in $\V$. We write $\sh(\X,\V)$ for the full subcategory of $\psh(\X,\V)$ consisting of sheaves.

(2) A precosheaf $\G \in \pcsh(\X,\V)$ is called a \textit{cosheaf} if for each covering family $\U_0$ of $\X$, the composition
\[
\Delta^{\mathrm{op}} \xrightarrow{\check{C}^\V(\mathcal{\U}_0)} \psh(\X,\V) \xrightarrow{\G^\d} \V 
\]
is a colimit diagram in $\V$. We write $\csh(\X,\V)$ for the full subcategory of $\pcsh(\X,\V)$ consisting of cosheaves.
\end{defi}

\begin{rem}
Assume that $\X$ has all pullbacks. Then a \v{C}ech nerve of a covering family of $\X$ can be regarded as a simplicial object in $\X$. Thus by Lemma \ref{yoneda ihom}, $\F \in \psh(\X,\V)$ is a sheaf if and only if $\F \circ \check{C}(\mathcal{\U}_0)^{\mathrm{op}}$ is a limit diagram for every covering family $\U_0$ of $\X$. Similarly, $\G \in \pcsh(\X,\V)$ is a cosheaf if and only if $\G \circ \check{C}(\mathcal{\U}_0)$ is a colimit diagram for every $\U_0$ by Lemma \ref{res section}.
\end{rem}

For a monomorphism $i_\U : j(\U) \to \yo^\i$ associated with a covering sieve $\U$ (see \cite[Prop.~6.2.2.5]{HTT}), we write $i^\V_\U = \overline{r}(i_\U)$ and $\yo^\V(\U) = \overline{r}(j(\U))$. We write $S_{\V} = \{i^\V_\U \in \mathcal{M}or(\psh(\X,\V))\ |\ \U \in J\}$. The following lemma assures that our definition of sheaves coincides with that of \cite[Def.~6.2.2.6]{HTT} when $\V=\igrpd$.

\begin{lem}\label{SV-local}
A presheaf $\F \in \psh(\X,\V)$ is a sheaf if and only if the morphism $\iHom'(i^\V_\U,\F)$ is an equivalence in $\V$ for all $i^\V_\U \in S_\V$. Moreover, when $r_*$ is conservative, i.e., it reflects equivalences, then $\F$ is a sheaf if and only if it is $S_\V$-local.
\end{lem}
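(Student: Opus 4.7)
The plan is to reduce the sheaf condition to an explicit locality condition by identifying the colimit of the \v{C}ech nerve with a known subobject. For a covering family $\U_0$ generating a covering sieve $\U$, I would first establish that $\colim{\Delta^{\mathrm{op}}} \check{C}(\U_0) \cong j(\U)$ in $\P(\X)$, which is the standard $\i$-topos fact that the colimit of the \v{C}ech nerve of $\coprod_i \yo^\i(U_i) \to \yo^\i(X)$ is its image, combined with \cite[Prop.~6.2.2.5]{HTT} to identify the image with $j(\U)$. Since $\overline{r}$ is a left adjoint by Lemma \ref{adj r} and hence colimit-preserving, this yields $\colim{\Delta^{\mathrm{op}}} \check{C}^\V(\U_0) \cong \yo^\V(\U)$ in $\psh(\X,\V)$, with the augmentation map corresponding to $i^\V_\U$.

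Next I would use that $\iHom'(-,\F)$ sends colimits in the first variable to limits: the end formula $\iHom'(\F',\F) = \int_{U \in \X} \iHom(\F'(U),\F(U))$, together with $\iHom(-,w)$ being a right adjoint for each $w$ and the commutation of ends with limits, gives this. Applying it to the colimit above,
\[
\iHom'(\yo^\V(\U),\F) \cong \lim_{[n] \in \Delta} \iHom'(\check{C}^\V(\U_0)_n,\F),
\]
and Lemma \ref{yoneda ihom} identifies $\iHom'(\yo^\V(X),\F)$ with $\F(X)$. Hence the augmented cosimplicial object $\iHom'(-,\F) \circ \check{C}^\V(\U_0)^{\mathrm{op}}$ of Definition \ref{def cosh}(1) is a limit diagram iff the augmentation $\iHom'(i^\V_\U,\F)$ is an equivalence, which proves the first assertion.

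For the moreover part, I would derive the identity $\sHom_{\psh(\X,\V)}(\F',\F) \cong r_*\iHom'(\F',\F)$ by specializing the $\V$-enriched adjunction $\sHom_\V(v,\iHom'(\F',\F)) \cong \sHom_{\psh(\X,\V)}(v \otimes \F',\F)$ to $v=\1$. Thus $S_\V$-locality of $\F$ amounts to $r_*\iHom'(i^\V_\U,\F)$ being an equivalence for every covering sieve $\U$, which follows from the sheaf condition and, under conservativity of $r_*$, also implies it. The main obstacle is the first step: cleanly setting up the \v{C}ech-nerve colimit with its augmentation and the sieve-to-subobject correspondence in $\P(\X)$, which requires appealing to the $\i$-topos machinery of \cite{HTT}.
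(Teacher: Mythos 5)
Your proposal is correct and follows essentially the same route as the paper: identify the colimit of the \v{C}ech nerve with $j(\U)$ and transport it along the colimit-preserving $\overline{r}$ to get $\yo^\V(\U)$, use that $\iHom'(-,\F)$ carries colimits to limits together with Lemma \ref{yoneda ihom} for the first assertion, and use $\sHom_{\psh(\X,\V)} \cong r_*\circ\iHom'$ (the paper cites Lemma \ref{riHom = sHom}) plus conservativity of $r_*$ for the second. The paper's proof is just a terser version of the same argument.
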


\begin{proof}
Let $\Delta^{\geq 1}$ for the full subcategory of $\Delta$ consisting of $[n]$ for $n \geq 1$. Since $\overline{r}$ preserves colimits, we have
\[
\mathop{\mathrm{colim}} \check{C}^\V(\mathcal{\U}_0)|_{\Delta^{\geq 1}} \cong \overline{r}(\mathop{\mathrm{colim}} \check{C}(\mathcal{\U}_0)|_{\Delta^{\geq 1}}) \cong \overline{r}(j(\U)) = \yo^\V(\U).
\]
Thus the first half of the assertion follows from Lemma \ref{yoneda ihom} and the second half follows from Lemma \ref{riHom = sHom}.
\end{proof}

Now we prove the main result. 
This can be applied to the $\i$-categories in the following Example \ref{ex sh=svloc}. 

\begin{thm}\label{main}
Assume that $\X$ and $\V$ satisfy \eqref{SL=L} and also the following condition:
\begin{equation}\label{sh=SVloc}
\sh(\X,\V) = \sh(\X,\V)^{S_{\V}\mathchar`-\mathrm{loc}}.
\end{equation}
Then the following hold.

(1) Under the pairing \eqref{pppeq}, we have $\csh(\X,\V) = \sh(\X,\V)^\vee$.

(2) The $\i$-category $\sh(\X,\V)$ is presentable.

(3) The $\i$-category $\csh(\X,\V)$ is presentable.

(4) The inclusion $\sh(\X,\V) \hookrightarrow \psh(\X,\V)$ admits a left adjoint.

(5) The inclusion $\csh(\X,\V) \hookrightarrow \pcsh(\X,\V)$ admits a right adjoint.

(6) There exists an equivalence of $\i$-categories $\csh(\X,\V) \cong [\sh(\X,\V),\V]^\mathrm{L}$.
\end{thm}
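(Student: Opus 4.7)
The plan is to combine the abstract machinery of Section~\ref{ndp} with the non-degeneracy result of Section~\ref{PPP}. The key observation is that, under \eqref{sh=SVloc}, the subcategory $\sh(\X,\V) \subseteq \psh(\X,\V)$ is realized as the $S_\V$-localization, so Propositions~\ref{loc rndp} and \ref{prop dually coloc} apply directly and produce all six conclusions from the corresponding facts about reflective subcategories and $S$-colocal objects.

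First I would set up the presentability infrastructure. The $\i$-categories $\psh(\X,\V)$ and $\pcsh(\X,\V)$ are presentable since $\X$ is small and $\V$ is presentable (by standard facts from \cite{HTT}), and $S_\V$ is small because $\X$ and $J$ are. Under \eqref{SL=L}, the pairing $\p{\cdot}{\cdot}$ is right non-degenerate by Proposition~\ref{psh-pcsh nd}. Under \eqref{sh=SVloc}, sheaves coincide with the $S_\V$-local presheaves, so \cite[Prop.~5.5.4.15]{HTT} immediately yields both (2), that $\sh(\X,\V)$ is presentable, and (4), that its inclusion into $\psh(\X,\V)$ admits a left adjoint.

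The core step is (1). By Proposition~\ref{prop dually coloc} applied to $S = S_\V$, we have $\sh(\X,\V)^\vee = (\psh(\X,\V)^{S_\V\text{-loc}})^\vee = \psh(\X,\V)_{S_\V\text{-col}^\vee}$, so $\G$ lies in $\sh(\X,\V)^\vee$ iff $\G^\d(i^\V_\U)$ is an equivalence in $\V$ for every covering sieve $\U$. Since $\G^\d$ is a left adjoint (Lemmas~\ref{adj G_d G^d} and \ref{riHom = sHom}) it preserves colimits; combined with the identification $\mathop{\mathrm{colim}}\check{C}^\V(\U_0)|_{\Delta^{\geq 1}} \cong \yo^\V(\U)$ from the proof of Lemma~\ref{SV-local} and $\G^\d \circ \yo^\V \cong \G$ from Lemma~\ref{res section}, this equivalence amounts to the statement that $\G^\d \circ \check{C}^\V(\U_0)$ is a colimit diagram for a covering family $\U_0$ generating $\U$. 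Since every covering sieve in a Grothendieck topology is generated by a covering family, quantifying over sieves and over covering families yields the same condition, whence $\sh(\X,\V)^\vee = \csh(\X,\V)$, proving (1).

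The remaining items follow formally. Applying Proposition~\ref{loc rndp}(2) with $\C = \psh(\X,\V)$ and $\D = \sh(\X,\V)$ yields (5), namely that $\csh(\X,\V) = \sh(\X,\V)^\vee$ is coreflective in $\pcsh(\X,\V)$; Lemma~\ref{pres. dual} applied to the restricted pairing $\sh(\X,\V) \times \sh(\X,\V)^\vee \to \V$ then gives presentability, proving (3). Proposition~\ref{loc rndp}(1) shows that this restricted pairing is right non-degenerate, so by the very definition of right non-degeneracy $\sh(\X,\V)^\vee \cong [\sh(\X,\V),\V]^\mathrm{L}$, which combined with (1) establishes (6). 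The one genuinely non-formal step is (1), where the sieve-inclusion formulation of $S_\V$-colocality must be translated into the \v{C}ech-nerve formulation of the cosheaf condition via the colimit-preservation of $\G^\d$; every other assertion is a direct application of the framework built in Sections~\ref{ndp} and \ref{PPP}.
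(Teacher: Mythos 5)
Your proposal is correct and follows essentially the same route as the paper: (2) and (4) via Lemma \ref{SV-local} and the theory of accessible localizations, (1) by translating the \v{C}ech-nerve cosheaf condition into $S_\V$-colocality using colimit-preservation of $\G^\d$ together with Lemma \ref{res section} and Proposition \ref{prop dually coloc}, and then (3), (5), (6) formally from Lemma \ref{pres. dual} and Proposition \ref{loc rndp}. Your treatment of (1) is in fact slightly more explicit than the paper's about the passage between covering sieves and the covering families generating them.
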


\begin{proof}
The assertion (2) follows from Lemma \ref{SV-local} and \cite[Prop.~5.5.4.15]{HTT}. Moreover, (1) and (2) imply (3)-(6). Indeed, since $\sh(\X,\V)$ is closed under limits in $\psh(\X,\V)$, (4) follows from (2) and \cite[Cor.~5.5.2.9]{HTT}. Therefore, (3), (5) and (6) follow from Lemma \ref{pres. dual} and Proposition \ref{loc rndp}. Thus it suffices to prove  (1). Since $\G^\d$ preserves colimits, a precosheaf $\G \in \pcsh(\X,\V)$ is a cosheaf if and only if the canonical morphism
\[
\p{\yo^\V(\U)}{\G} \to \p{\yo^\V(U)}{\G}
\]
is an equivalence in $\V$ by Lemma \ref{res section}. Thus we obtain (1) by Proposition \ref{prop dually coloc}.
\end{proof}

Most $\i$-categories from Example \ref{ex sl=l} satisfy the condition \eqref{sh=SVloc}.

\begin{examp}\label{ex sh=svloc}
(1) Clearly, $\igrpd$ and $\igrpd_{\leq n}$ for $n \geq 0$ satisfy \eqref{sh=SVloc}.

(2) The $\i$-category $\mathcal{D}_{\geq 0}(\R)$ in Example \ref{ex sl=l} (2) satisfies \eqref{sh=SVloc} by Lemma \ref{SV-local}. Indeed, $r_*$ is conservative by the Dold-Kan correspondence.

(3) The $\i$-category $\mathbf{Sp}_{\geq 0}$ in Example \ref{ex sl=l} (3) satisfies \eqref{sh=SVloc} by Lemma \ref{SV-local}. Indeed, $r_*$ is conservative by \cite[Cor.~7.1.4.13]{HA}.
\end{examp}

\begin{rem}\label{ord cosh}
Recall that if $\X$ is a $1$-category and $\V = \set$, cosheaves are usually defined via coequalizer diagrams (cf.~\eqref{coeq cosh}). Theorem \ref{main} (1) says that our Definition \ref{def cosh} (2) is equivalent to this more familiar definition for such $(\X,\V)$. Indeed, we can prove that the category of cosheaves defined by coequalizer diagrams is the dual of the category of sheaves by a similar proof. Similarly, our definition is also a generalization of ordinary cosheaves of modules.
\end{rem}

\begin{rem}
In the context of $\i$-topos theory, sheaves on a simplicial set $\X$ are sometime defined as an object of an $\i$-topos $\mathcal{T}$ on $\X$. Note that there exists an $\i$-topos which is not the $\i$-category of sheaves on a Grothendieck topology on $\X$, unlike the ordinary topos theory. Then Theorem \ref{main} suggests a definition of cosheaves in such a situation. Namely, we can define a cosheaf on $\mathcal{T}$ as an object of the dual $\mathcal{T}^\vee$, or equivalently, a left adjoint functor $\mathcal{T} \to \igrpd$.
\end{rem}


\begin{thebibliography}{99}
  \bibitem[BF06]{BF} M. Bunge and J. Funk, \textit{Singular Coverings of Toposes}, Lecture Notes in Math., \textbf{1890}, Springer, Berlin (2006).
  \bibitem[Gla16]{Gla} S. Glasman, \textit{A spectrum-level Hodge filtration on topological Hochschild homology}, 
  Selecta Math. (N.S.) \textbf{22} (2016) 1583-1612.
  \bibitem[Lur09]{HTT} J. Lurie, \textit{Higher topos theory}, Annals of Mathematics Studies, \textbf{170}, Princeton University Press, Princeton, NJ (2009).
  \bibitem[Lur12]{HA} J. Lurie, \textit{Higher algebra}, preprint, available at https:{\slash}{\slash}www.math.ias.edu{\slash}\~{}lurie/outdated.html (2012).
  \bibitem[Pra16]{Pra} A.V. Prasolov, \textit{Cosheafification}, Theory, Appl. Categ., \textbf{31} (2016) 1134-1175.
\end{thebibliography}
\end{document}